\documentclass[12pt]{amsart}

\usepackage{amsfonts,amsmath,amssymb,amsthm,colonequals}
\usepackage{caption}
\usepackage{xcolor,tikz,fp}
\usetikzlibrary{calc, decorations.markings, patterns, arrows.meta}

\definecolor{gblue}{HTML}{4285f4}   
\definecolor{ggreen}{HTML}{0f9d58}  
\definecolor{gyellow}{HTML}{f4b400} 
\definecolor{gred}{HTML}{db4437}    

\definecolor{maincol}{HTML}{4285f4}   

\usepackage{hyperref}

\usepackage[noabbrev,capitalise]{cleveref}

\newtheorem{theorem}{Theorem}
\newtheorem{proposition}[theorem]{Proposition}
\newtheorem{lemma}[theorem]{Lemma}

\theoremstyle{definition}
\newtheorem{definition}[theorem]{Definition}
\newtheorem{remark}[theorem]{Remark}

\newtheorem{observation}[theorem]{Observation}

\newcommand\Conf{\textup{Conf}}
\newcommand\Exp{\textup{Exp}}
\newcommand\Ran{\textup{Ran}}
\newcommand\id{\textup{id}}
\newcommand\im{\textup{im}}
\newcommand\Mb{\textup{Mb}}

\newcommand\Z{\mathbf{Z}}
\newcommand\R{\mathbf{R}}
\newcommand\codiag{\raisebox{7pt}{\rotatebox{180}{$\triangle$}}}
\newcommand\define[1]{{\textbf{#1}}}
\newcommand\piccap[1]{\parbox{2cm}{\centering \textit{#1}}}

\renewcommand\epsilon\varepsilon

\colorlet{a}{white}
\colorlet{b}{black!80}
\colorlet{ab}{a!50!b}
\newcommand\picspacer{2.24} 
\newcommand\ppicspacer{2.4} 

\title[Constructing simple connectedness]{A constructive proof for the simple connectedness of finite subset spaces}
\author{J\=anis Lazovskis}
\address{Institute of Mathematics, University of Aberdeen, Aberdeen AB24 3FX, United Kingdom}
\curraddr{Institute of Clinical and Preventive Medicine, University of Latvia, Riga LV-1586, Latvia}
\email{janis.lazovskis@lu.lv}
\date{\today}
\keywords{finite subset space, simple connectedness, constructive proof, path homotopy}
\subjclass{55R80 (primary), 55P15, 57M05, 55Q52 (secondary)}

\begin{document} 

\begin{abstract}
The space of all finite non-empty subsets of a topological space $X$, also known as the Ran space of $X$, is weakly contractible for $X$ path connected.
We consider subspaces $\Ran_{\leqslant n}(X)$ of the Ran space given by all subsets of $X$ of size at most $n$, and their first homotopy groups.
These groups are known to be trivial for $n\geqslant 3$ when $X$ is a path connected CW-complex, though the proofs are not constructive.
We show that the induced map $\pi_1(\Ran_{\leqslant n}(X)) \to \pi_1(\Ran_{\leqslant n+2}(X))$ is trivial for all positive integers $n$, by explicitly drawing the path homotopies that contract any loop in $X$ to a point.
From this we get a constructive proof for the triviality of $\pi_1(\Ran_{\leqslant n}(X))$, for all $n\geqslant 4$.
\end{abstract}

\maketitle

\section{Introduction}
Let $X$ be a topological space.
Finite subsets of $X$, also known as configurations, and the spaces they define, also known as finite subset spaces, provide a way to encode the geometry and topology of a space.
They appear in various fields of mathematics \cite{MR2605307}, from braid groups \cite{MR375281} to higher algebra \cite{dag4} and factorization homology \cite{MR2058353}.
Withholding the choice of topology for a moment, the set of all finite subsets of $X$ of size $n$ is known as the (unordered) \define{configuration space} $\Conf_n(X)$ of $X$, and we call the collection of all configuration spaces for positive $n$ as the \define{Ran space} $\Ran(X)$.
Restricting to configurations of size at most $n$, we get the space $\Ran_{\leqslant n}(X)$, which we also call the \define{Ran space}, when the difference is clear from context.
The empty set in $X$, as a configuration of size 0, together with $\Ran(X)$ defines the \define{exponential} $\Exp(X)$ of $X$.
The naming of and symbols for these spaces are not universal, and variations with different decorations abound; the present choice in informed by the relationship to stratifications and algebraic constructions \cite{Lazovskis2019}.

\begin{figure}[hbtp!]
    \centering
    \includegraphics{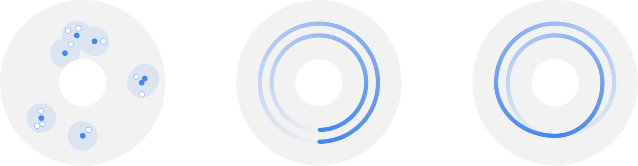}
    \captionsetup{width=.9\linewidth}
    \caption{
    Two nearby configurations on a topological space $X$ (left), one in a darker color the other in white, with a neighborhood emphasized around the former.
    A loop $S^1\to \Conf_2(X)$ composed of two nearby loops $S^1\to X$ (center).
    A loop $S^1\to \Ran_{\leqslant 2}(X)$ composed of loops $S^1\to X$ going in opposite directions (right).
    A continuous image $[0,1]\to X$ is drawn by a path going from a light $(t=0)$ to a dark color $(t=1)$.
    }
    \label{fig_rantopology}
\end{figure} 

We take the topology on $\Ran(X)$ and $\Ran_{\leqslant n}(X)$ to be the coarsest topology (that is, having as few open sets as possible) for which, given any finite collection $\{U_i\subseteq X\}_i$ of nonempty, pairwise disjoint open sets $U_i$, the set $\{$finite $U\subseteq X\ :\  U\subseteq \bigcup_iU_i,\ U\cap U_i \neq\emptyset\ \forall\ i\}$ is open.
This definition follows \cite{dag4}, and corresponds with the metric induced by the Hausdorff metric on subsets when $X$ has a metric topology.
The intuition, presented in \cref{fig_rantopology}, is that given an element $x\in \Ran(X)$, some other $y\in \Ran(X)$ is in a ``small neighborhood'' of $x$ whenever each small neighborhood (in the topology on $X$) of each point of $x$ contains at least one point of $y$. 
A more detailed discussion of common topologies on spaces of configurations is presented in \cite{MR4871695}.

\subsection{Related work}
Among others, Beilinson and Drinfeld \cite[Section 3.4.1]{MR2058353} show that all the homotopy groups of $\Ran(X)$ are trivial when $X$ is path-connected.
Their proof does not provide an explicit homotopy from $S^d \to \Ran(X)$ to the trivial morphism $*\to \Ran(X)$, and their argument on $\Ran(X)$ can not be directly extended to $\Ran_{\leqslant n}(X)$. 
Indeed, the argument \cite{MR2058353,MR4871695} for triviality relies on the composition 
\begin{equation}
  \label{eq_bdmap}
  \Ran(X) \xrightarrow{\ \triangle\ } \Ran(X)\times \Ran(X) \xrightarrow{\ \codiag\ } \Ran(X)
\end{equation}
being the identity, where $\triangle(x) = (x,x)$ is the diagonal map and $\codiag(x,x) = ``x\cup x"$ is the codiagonal map that takes elements in Ran spaces to the element corresponding to their set union, when both are considered as subsets of $X$.
Conversely, on the Ran spaces $\Ran_{\leqslant n}(X)$, the codiagonal map has target $\Ran_{\leqslant 2n}(X)$, so the analogous composition from \cref{eq_bdmap} becomes
\begin{equation}
  \label{eq_bdmap2}
  \Ran_{\leqslant n}(X) \xrightarrow{\ \triangle\ } \Ran_{\leqslant n}(X)\times \Ran_{\leqslant n}(X) \xrightarrow{\ \codiag\ } \Ran_{\leqslant 2n}(X), 
\end{equation}
and the composition is the inclusion from $\Ran_{\leqslant n}(X)$ into $\Ran_{\leqslant 2n}(X)$, which is not the identity map.

Such inclusions are further considered by others, in particular Handel \cite{MR1823966}, who applies the composition of \cref{eq_bdmap2} to prove that the maps $\pi_k(\Ran_{\leqslant n}(X)) \rightarrow \pi_k(\Ran_{\leqslant 2n+1}(X))$ induced by inclusions are trivial, for all integers $k\geqslant 1$ and all $n\geqslant 0$.
Handel builds on the same result using the based Ran space, that is, the subspace of $\Ran_{\leqslant n}(X)$ of all elements containing a chosen $x_0\in X$.
The result for based spaces relies on having a natural construction for the wedge sum of two based Ran spaces, which are wedged at the basepoint $\{x_0\}$.
The map on the non-based Ran spaces is then factored through the based versions by adjoining the basepoint, hence the ``$+1$'' in the index of the result.

The results of Tuffley \cite{MR2105772} give that $\pi_1(\Ran_{\leqslant n}(X)) = 0$ for $X$ a CW-complex and for all $n \geqslant 3$, though the general statement also considers the connectedness of $X$ in higher dimensions and higher homotopy groups, and applies to infinite complexes.
This statement uses special covers of $X$ and the Hurewicz theorem, relying on a choice of cells of $X$, which are not explicitly determined.

Kallel and Sjerve \cite{hha/1296138520} work with simplicial complexes $X$ to also prove $\pi_1(\Ran_{\leqslant n}(X)) = 0$ for all $n \geqslant 3$, by constructing $\Ran_{\leqslant n}(X)$ as a quotient of the $n$-fold symmetric product of $X$.
They use the van Kampen theorem to get the existence of a particular element of $\pi_1(X\times X)$ which is a witness for the triviality of $\Ran_{\leqslant 3}(X)$, followed by an inductive argument for higher indices. 

Symmetric products also appear in the work of F\'elix and Tanr\'e \cite{MR2648705}, who prove similar results about the triviality on the level of homotopy of inclusion maps $\Ran_{\leqslant n}(X) \to \Ran_{\leqslant m}(X)$.
The indices $m,n$ are related to the Lusternik--Schnierlmann category of the CW-complex $X$, and they also consider based Ran spaces.
Similarly to the previously mentioned work, the triviality results follow from existence proofs, without explicit constructions.

\subsection{Contribution}
Our main contribution is a constructive argument for the triviality of any loop $\gamma \colon [0,1] \to \Ran(X)$, for $X$ an arbitrary path connected topological space.
Our construction is contained within $\Ran_{\leqslant n+2}(X)$, whenever $\im(\gamma)$ is contained within $\Ran_{\leqslant n}(X)$. 

For generalizing previous results, \cref{thm_xthm1} extends the claims of \cite[Theorem 4.2]{MR1823966} and \cite[Theorem 1]{MR2648705} to a larger class of maps, asserting the triviality of $\pi_1(\Ran_{\leqslant n}(X)) \rightarrow \pi_1(\Ran_{\leqslant n+2}(X))$.
In addition, \cref{thm_xthm3} on the simple connectedness of $\Ran_{\leqslant n}(X)$ for $n\geqslant 4$ extends the claims of the Lemma of \cite[Section 3.4.1]{MR2058353} from $\Ran(X)$ to $\Ran_{\leqslant n}(X)$, and \cite[Theorem 1]{MR2105772}, \cite[Corollary 2.2]{hha/1296138520} from CW-complexes to arbitrary topological spaces.
These results rely on the same constructive arguments, whose essence is presented visually in \cref{fig_lemmaproof,fig_hom}.

\section{Preliminary constructions}
\label{sec_preliminary}

Let $X$ be a path connected topological space.
The 1-dimensional circle $S^1$ will be considered as the quotient space $[0,1]/(0\sim1)$.
For positive integers $m \leqslant n$, we denote by $i\colon \Ran_{\leqslant m}(X) \hookrightarrow \Ran_{\leqslant n}(X)$ the natural inclusion of Ran spaces.
Let $\iota \colon X\hookrightarrow \Ran_{\leqslant n}(X)$ be the map that takes a point $x$ to the set $\{x\}$.
By functoriality, we get a map $\iota_* \colon \pi_1(X) \to \pi_1(\Ran_{\leqslant n}(X))$. 
By an abuse of notation, we also denote by $\iota$ the natural inclusion of any finite nonempty subset of $X$ of size at most $n$ into $\Ran_{\leqslant n}(X)$.
Conversely, we consider any element $y\in \Ran(X)$ as a subset $y\subseteq X$ without any change in notation.

\begin{observation}
\label{obs_wedge}
Every continuous $S^1 \to X^n$ defines a map $S^1 \to \Ran_{\leqslant n}(X)$ by composing with $\iota$, but not every map $S^1 \to \Ran_{\leqslant n}(X)$ factors through $X^n$.
This is, first, due to monodromy, as we may need to consider intervals with two basepoints instead of loops with one basepoint.
Second, we may need to account for ``branch points'', which might be arranged in pathological ways so that $n$ images of $S^1$ in $X$ may not suffice for all of them.
\end{observation}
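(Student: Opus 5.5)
The statement has two parts: continuity of the composite, and the existence of a loop that does not lift. I would treat them separately. For the first part I would show that the map $\iota\colon X^n\to\Ran_{\leqslant n}(X)$, $(x_1,\dots,x_n)\mapsto\{x_1,\dots,x_n\}$, is continuous. Then any continuous $S^1\to X^n$ composes to a continuous $S^1\to\Ran_{\leqslant n}(X)$.

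Since the topology on $\Ran_{\leqslant n}(X)$ is the coarsest one containing the basic sets
\[
V(U_1,\dots,U_k)=\{\,U : U\subseteq \textstyle\bigcup_i U_i,\ U\cap U_i\neq\emptyset\ \forall i\,\}
\]
for pairwise disjoint nonempty open $U_i$, it suffices to check that each preimage is open. I would write the preimage explicitly. A tuple $(x_1,\dots,x_n)$ lies in $\iota^{-1}(V(U_1,\dots,U_k))$ exactly when there is a surjection $f\colon\{1,\dots,n\}\to\{1,\dots,k\}$ with $x_j\in U_{f(j)}$ for every $j$. Here $f$ is well defined by disjointness, and surjectivity encodes that every $U_i$ is hit. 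Hence
\[
\iota^{-1}(V(U_1,\dots,U_k))=\bigcup_{f}\ \prod_{j=1}^n U_{f(j)},
\]
a finite union of open boxes, which is open in the product topology. This settles the first claim.

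For the second claim I would give an explicit non-liftable loop illustrating the monodromy phenomenon. Take $X=S^1\subseteq\mathbf{C}$, $n=2$, and define
\[
\gamma(t)=\{e^{\pi i t},\,-e^{\pi i t}\},\qquad t\in[0,1].
\]
This is continuous, being the image under $\iota$ of the continuous path $t\mapsto(e^{\pi i t},-e^{\pi i t})$ in $X^2$. It is a loop, since $\gamma(0)=\gamma(1)=\{1,-1\}$ as sets, although the two points have been interchanged.

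Suppose $\gamma=\iota\circ\tilde\gamma$ for some continuous $\tilde\gamma=(a,b)\colon S^1\to X^2$. Because the two points of $\gamma(t)$ are always distinct, $a(t)$ and $b(t)$ must be the two points of $\gamma(t)$ in some order. The set of $t$ where $a(t)=e^{\pi i t}$ is closed, by continuity. Its complement, the set where $a(t)=-e^{\pi i t}$, is also closed. Connectedness of $[0,1]$ then forces, say, $a(t)=e^{\pi i t}$ for all $t\in[0,1]$. This gives $a(0)=1$ but $a(1)=-1$, contradicting $\tilde\gamma(0)=\tilde\gamma(1)$. So $\gamma$ does not factor through $X^2$, although it lifts as a path on $[0,1]$ with two distinct endpoints, which is the ``interval with two basepoints'' of the statement.

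The ``branch point'' remark I would treat informally, noting that it concerns cardinality changes along the loop. Points may merge and split infinitely often, for instance accumulating at a single time. In such a case no consistent choice of $n$ coordinate paths needs to exist, even after passing to a finite cover of $S^1$. The only step needing care is the clopen labeling argument, which hinges on the points staying distinct. I expect no serious obstacle.
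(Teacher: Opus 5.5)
Your proposal is correct, and it proves more than the paper does. The paper gives no proof of this observation: it offers only the two heuristic sentences about monodromy and branch points, and it uses continuity of $\iota\colon X^n\to\Ran_{\leqslant n}(X)$ without comment. You follow the same two-phenomenon outline but make the first part rigorous. The subbasis computation $\iota^{-1}(V(U_1,\dots,U_k))=\bigcup_f\prod_j U_{f(j)}$, taken over surjections $f$, is exactly what continuity requires. The antipodal loop $\gamma(t)=\{e^{\pi i t},-e^{\pi i t}\}$, together with the clopen labelling argument, is a concrete and correctly verified instance of the ``interval with two basepoints''.

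Two small improvements are available.

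First, your clopen argument only uses $a(t)\in\gamma(t)$ for a single coordinate. So it actually shows that $\gamma$ has no continuous selection $S^1\to S^1$ at all. Hence $\gamma$, viewed in $\Ran_{\leqslant n}(S^1)$, fails to factor through $X^n$ for every $n\geqslant 2$, not only for $n=2$.

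Second, you and the paper both leave the branch-point clause heuristic, but it can be made rigorous without any infinite accumulation. Take $X=[0,1]$ and the loop in $\Ran_{\leqslant 3}([0,1])$ given by $\sigma(t)=\{0,\tfrac14\sin(2\pi t),1\}$ for $t\leqslant\tfrac12$ and $\sigma(t)=\{0,1+\tfrac14\sin(2\pi t),1\}$ for $t\geqslant\tfrac12$. On each open half the three points are distinct, so your clopen argument forces each coordinate of a putative lift to follow one strand there. By continuity at $t=\tfrac12$, the left half then forces two of the three coordinates to equal $0$, while the right half forces two of them to equal $1$, which is impossible. So $\sigma$ does not lift to $X^3$ even as a path on $[0,1]$. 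The obstruction therefore has nothing to do with monodromy, and no cover of $S^1$ can help. This is precisely the paper's ``$n$ images may not suffice'' phenomenon.
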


\begin{definition}
Let $\sigma \colon S^1 \to \Ran(X)$ be continuous, and $p\in X$ with $p\in \sigma(t)$ for some $t\in [0,1]$.
The point $p$ is a \define{branch point} of $\sigma$ if for every open neighborhood $U\subseteq X$ of $p$, there exists $\epsilon>0$ satisfying $|U\cap \sigma(t+\epsilon)| >1$.
Analogously, $p$ is a \define{merge point} of $\sigma$ if for every open neighborhood $U\subseteq X$ of $q$, there exists $\epsilon>0$ satisfying $|U\cap \sigma(t-\epsilon)| >1$.
\end{definition}

As $\pi_1$ is invariant under change of basepoint and its classes are invariant under reparametrization, every continuous $\sigma \colon S^1\to \Ran_{\leqslant n}(X)$ does factor through $X^n$, up to homotopy.
That is, for any chosen basepoint $b\in X$, we always have a path from $b$ to $\sigma(0)$, and any loop can be reparametrized to have at most one branch point and one merge point.
We state this claim with a description of this homotopy.

\begin{figure}[hbtp!]
    \centering
    \includegraphics{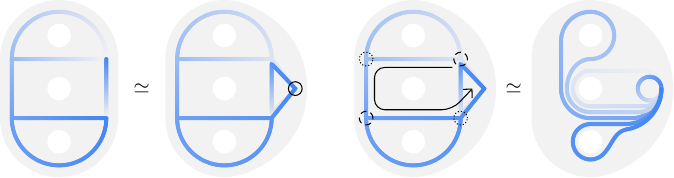}
    \captionsetup{width=.9\linewidth}
    \caption{The steps of the proof of \cref{lem_xlem} presented visually. Given a continuous loop in $\Ran_{\leqslant n}(X)$, drawn as a subset of $X$ (left), a new basepoint is added via homotopy (center left). Branch points and merge points (dashed circles and dotted circles, respectively) are identified and pushed, following the arrow (center right), to the basepoint via homotopy (right).}
    \label{fig_lemmaproof}
\end{figure} 

\begin{lemma}
\label{lem_xlem}
For every continuous $\sigma \colon S^1 \to \Ran_{\leqslant n}(X)$, there exists a continuous $\hat \sigma \colon S^1 \to \Ran_{\leqslant n}(X)$ which factors through $X^n$, satisfies $|\hat\sigma(0)| = 1$, 
and is homotopic to $\sigma$.
\end{lemma}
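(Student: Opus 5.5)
The plan is to build the homotopy in two moves, following the picture in \cref{fig_lemmaproof}: first attach a ``whisker'' to a basepoint so that the loop starts at a singleton, and then absorb the combinatorics of the loop into parametrized paths in $X$ that all begin and end at that basepoint.

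\textbf{Step 1: reduce to a singleton at time $0$.} Pick any point $b\in\sigma(0)$; this uses that $\sigma(0)$ is nonempty. The constant path at $\sigma(0)$ is homotopic, relative to its endpoints, to the composite of two paths: the path $s\mapsto \sigma(0)\setminus\{\text{far points}\}$ collapsing $\sigma(0)$ to $\{b\}$, and its reverse. Concretely, since $X$ is path connected, choose paths from $b$ to each $x\in\sigma(0)$. The sets $\{b\}\cup\{\text{points along these paths at time } s\}$ form a path in $\Ran_{\leqslant n}(X)$ from $\{b\}$ to $\sigma(0)$, because their size is at most $|\sigma(0)|\leqslant n$. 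Conjugating $\sigma$ by this path gives a free homotopy of loops to a loop $\sigma_1$ with $\sigma_1(0)=\{b\}$.

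\textbf{Step 2: make the loop factor through $X^n$.} The key tool is the union trick, which is available as long as the cardinality stays at most $n$. I will use the following fact. If $\tau\colon[0,1]\to\Ran_{\leqslant n}(X)$ is a path and $p\in X$ is a point, then $H(s,t)=\tau(t)\cup\{\alpha_s(t)\}$ is continuous, where $\alpha_s$ is a family of paths. Here $H$ must be kept inside size $n$; for this I will add only points that already coincide with, or are paths starting at, existing points of $\tau(t)$. The main approach is to choose, for each $t$, a lift: by compactness of $[0,1]$ and the definition of the topology, subdivide $0=t_0<\dots<t_k=1$ finely so that on each $[t_j,t_{j+1}]$ the configuration $\sigma_1(t)$ stays inside a fixed disjoint family of open sets. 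Within each subinterval, I will replace $\sigma_1$ by a homotopic path that factors through $X^n$, namely one in which the points move along explicit paths. At the subdivision times I will match up coordinates by repeating points, which is allowed because $(x_1,\dots,x_n)\mapsto\{x_1,\dots,x_n\}$ does not require distinct entries. The branch and merge points are then pushed along $\sigma_1$ back to the time $t=0$ (respectively forward to $t=1$), so that every coordinate path starts and ends at $b$. This gives $\hat\sigma(t)=\iota(\gamma_1(t),\dots,\gamma_n(t))$ with all $\gamma_i(0)=\gamma_i(1)=b$, so $|\hat\sigma(0)|=1$.

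\textbf{Main obstacle.} The hard part is the within-subinterval step. A general continuous path in $\Ran_{\leqslant n}(X)$ need not admit continuous coordinate paths at all, since its points may split and merge infinitely often. My first attempt will be to use path connectedness inside the chosen open sets when these are path connected, replacing $\sigma_1$ on $[t_j,t_{j+1}]$ by a straight-line-style interpolation. This interpolation would be the path that keeps $\sigma_1(t_j)$, grows to $\sigma_1(t_j)\cup\sigma_1(t_{j+1})$, and then shrinks to $\sigma_1(t_{j+1})$. I will then check that this is homotopic to the original path and that the cardinality bound $n$ is respected, possibly by doing the growth one point at a time. If local path connectedness fails, I would expect to have to restrict the class of $X$, or to accept ``up to homotopy'' in a weaker sense as the paper's remark suggests.
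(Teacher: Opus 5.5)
\textbf{There is a genuine gap: your Step~2 is never carried out.} Your Step~1 is fine. Step~2, however, carries all the content of the lemma, and you leave it open; the route you sketch for it fails as stated.

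First, the interpolation from $\sigma_1(t_j)$ through $\sigma_1(t_j)\cup\sigma_1(t_{j+1})$ to $\sigma_1(t_{j+1})$ can pass through configurations with up to $2n$ points, so it leaves $\Ran_{\leqslant n}(X)$. Adding points one at a time does not help: if $|\sigma_1(t_j)|=n$, any configuration strictly containing $\sigma_1(t_j)$ already has $n+1$ points. A replacement that respects the bound would instead collapse the points of $\sigma_1(t_j)$ in each $U_i$ to a single point $c_i$, and then spread $\{c_1,\dots,c_k\}$ out to $\sigma_1(t_{j+1})$, using $n$ coordinates throughout. Even this already needs paths inside the $U_i$.

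Second, and more seriously, you never justify that the replacement is homotopic to $\sigma_1|_{[t_j,t_{j+1}]}$ relative to its endpoints. The natural way to get this is to know that the open set of configurations of size at most $n$ contained in $\bigcup_i U_i$ and meeting every $U_i$ is simply connected. That holds, e.g.\ when each $U_i$ admits a contraction that can be applied to all of its points at once, as for convex charts or open stars. For an arbitrary path connected $X$ there is no reason for such $U_i$ to exist: the hypothesis gives neither local path connectedness nor any local simple connectivity. You concede this by proposing to restrict the class of $X$, but then you would prove a weaker statement than the lemma, which is asserted for every path connected $X$.

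Note also that once $\sigma_1(0)=\{b\}$, any continuous lift of the path $\sigma_1$ to $X^n$ automatically begins and ends at $(b,\dots,b)$. So your final step of pushing branch and merge points to $t=0$ and $t=1$ is superfluous in your scheme. The whole problem reduces to lifting a path to $X^n$ up to homotopy relative to endpoints, which is exactly the unproved step.

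The paper avoids local analysis altogether. After basing the loop at $\{b\}$ as you do, it handles each branch or merge point $p\neq b$ at time $t_0$ separately. It takes a continuous selection $\tau\colon[t_0,1]\to X$ with $\tau(t)\in\sigma'(t)$ and inserts $\tau\circ\tau^{-1}$ at $p$, pushing $p$ along $\tau$ to the basepoint. The resulting loop passes through $b$ between consecutive branch and merge points, and hence factors through $X^n$.

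To complete your own approach, you would need one of two things. Either state and use an explicit local hypothesis on $X$ that yields the homotopies relative to endpoints, or replace the subdivision by a global argument of the paper's kind.
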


\begin{proof}
By construction, $|\sigma(0)|\leqslant n$, so for some $b\in X$, let $\gamma\colon [0,1]\to \Ran_{\leqslant n}(X)$ be a continuous path with $\gamma(0)=b$ and $\gamma(1) = \sigma(0)$. 
Then $\sigma' \colonequals \gamma \circ \sigma \circ \gamma^{-1}$ is a loop in $\Ran_{\leqslant n}(X)$ based at $b$ and homotopic to $\sigma$.

Next, let $p\in X, p\neq b$ be a branch point or a merge point of $\sigma'$ at $t_0\in (0,1]$.
Let $\tau \colon [t_0,1]\to X$ be such that $\tau(t)\in \sigma'(t)$ for every $t\in [t_0,1]$.
Since $\tau(t_0) \neq \tau(1)$, there is a homotopy from $\sigma'$ to a loop in which the image of $\tau$ has been contracted to the basepoint $b$, by ``pushing" $\tau(t_0)$ along the image of $\tau$ to $\tau(1)$, as in \cref{fig_lemmaproof} (right). 
Formally, we insert the path $\tau \circ \tau^{-1}$ into $\sigma'$ at $p$, and reparametrize appropriately.

Doing this for all branch points and merge points describes a loop $\sigma'' \simeq \sigma'$ that visits $b$ between every branch point and merge point, and which still satisfies $|\sigma''(t)| \leqslant n$ for all $t$.
As a result, $\sigma''\colon S^1 \to \Ran_{\leqslant n}(X)$ may be factored as $S^1 \to X^n \to \Ran_{\leqslant n}(X)$.
This $\sigma''$ is the requested $\hat\sigma$.
\end{proof}

\cref{lem_xlem} tells us that the diagram 
\begin{equation}
    \begin{tikzpicture}[xscale=3.5,yscale=1.5,baseline=.75cm]
\node (s1) at (0,1) {$S^1$};
\node (s1k) at (0,0) {$(S^1)^n$};
\node (xk) at (1,0) {$X^n$};
\node (ran) at (1,1) {$\Ran_{\leqslant n}(X)$};
\draw[->] (s1) to node[left] {$\triangle$} (s1k);
\draw[->] (s1k) to node[below] {$(\hat\sigma_1,\dots,\hat\sigma_n)$} (xk);
\draw[->] (s1) to node[above] {$\sigma$} (ran);
\draw[->] (xk) to node[right] {$\iota$} (ran);
\node at (.5,.5) {$\simeq$};
\end{tikzpicture}
    \label{diag_commsq1}
\end{equation}
commutes up to homotopy.
This also gives us a way to talk about the individual loops in each factor of $X^n$, with the map $(S^1)^n \to X^n$ defined component-wise by $\hat\sigma_j \colon S^1 \to X$.

\begin{lemma}
\label{prop_s1prop1}
The map on homotopy groups $\pi_1(\Ran_{\leqslant 1}(S^1)) \rightarrow \pi_1(\Ran_{\leqslant 3}(S^1))$ induced by inclusion of spaces is trivial.
\end{lemma}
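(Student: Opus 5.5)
The generator of $\pi_1(\Ran_{\leqslant 1}(S^1)) = \pi_1(S^1) \cong \Z$ is the loop $g(t) = \{\gamma(t)\}$, where $\gamma(t) = e^{2\pi i t}$. So it suffices to contract the single loop $\iota\circ g$ explicitly inside $\Ran_{\leqslant 3}(S^1)$. I will work with free homotopies of loops, i.e.\ maps $H\colon [0,1]^2 \to \Ran_{\leqslant 3}(S^1)$ with $H(s,0) = H(s,1)$ for all $s$. This is harmless: a loop freely homotopic to a constant loop is trivial in $\pi_1$, since conjugating by the basepoint track does not change triviality. Throughout, sets are read as subsets of $S^1$, so coincident points merge automatically.

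\emph{Step 1 (uses two points).} I would first show $g$ is freely homotopic to the ``split'' loop $\alpha(t) = \{1, \gamma(t)\}$, in which one point stays at the basepoint while a second point runs once around the circle. A candidate homotopy is $(s,t) \mapsto \{\gamma(st), \gamma(t)\}$, since it equals $\alpha$ at $s=0$ and $g$ at $s=1$. However, its ends at $t=0$ and $t=1$ are $\{1\}$ and $\{\gamma(s),1\}$, which do not agree. So I would modify it, either by adjoining a third point that tracks the discrepancy, or by splitting $[0,1]$ into subintervals and inserting a short back-and-forth path near $t=1$, until the two ends of every loop $H(s,\cdot)$ agree. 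The same construction with $\gamma$ replaced by $\bar\gamma(t)=\gamma(1-t)$ gives $g^{-1} \simeq \alpha'$, where $\alpha'(t) = \{1,\bar\gamma(t)\}$.

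\emph{Step 2 (the real content, uses the third point).} I would then show that $\alpha$ is freely homotopic to a constant in $\Ran_{\leqslant 3}(S^1)$, or equivalently that $\alpha \simeq \alpha'$, which gives $g \simeq g^{-1}$. The plan is to exploit the fact that a configuration $\{x,y\}$ does not remember which point is which. Consider the symmetric loop $t\mapsto\{1,\gamma(t),\gamma(-t)\}$, in which two points leave $1$ in opposite directions and meet at $-1$. Its first half is a path from $\{1\}$ to $\{1,-1\}$ that can be read both as ``half of $\alpha$'' and as ``half of $\alpha'$''. The idea is to slide the stationary point $1$ along the arc that one of the moving points has already swept. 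This converts $\alpha$ into the concatenation of two half-circle paths of unordered pairs, each of which can be retracted through $\Ran_{\leqslant 3}$ using the third point as the ``pushing'' point of \cref{lem_xlem}.

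\emph{Step 3.} Combining the two relations gives $g \simeq g^{-1}$, hence $g^2 \simeq *$. One more application of the same sliding trick, with the third point used to cancel the remaining half-turn, would give $g \simeq *$. If the relation $g\simeq g^{-1}$ turns out to be the only one that Step 2 produces, I would instead aim for a direct contraction of $\alpha$ with an explicitly closed square of three-point configurations.

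The main obstacle is Step 2, and specifically the bookkeeping of the endpoints $t=0,1$. My first naive attempts, such as $\{\gamma(st),\gamma(t),\gamma(s)\}$, close up correctly but turn out to be constant in $s$. So the construction must genuinely use the unordered nature of the configuration, letting a moving point and the stationary point trade roles at the moment they coincide.
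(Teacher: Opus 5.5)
Your proposal has a genuine gap. Step 1 is false as stated, and the step that carries all the content is never carried out.

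\textbf{Step 1.} In $\Ran_{\leqslant 2}(S^1)$ the loops $g$ and $\alpha$ are not freely homotopic. The map $\{x,y\}\mapsto xy$ descends from $(x,y)\mapsto xy$ on $(S^1)^2$, so it is continuous on $\Ran_{\leqslant 2}(S^1)$, with $\{x\}=\{x,x\}\mapsto x^2$. It sends $g$ to a loop of degree $2$ and $\alpha$ to a loop of degree $1$. So no two-point back-and-forth correction can close your candidate up into $g\simeq\alpha$.

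What the two-point correction actually produces is different. Concatenate $t\mapsto\{\gamma(st),\gamma(t)\}$ with $u\mapsto\{1,\gamma(s(1-u))\}$. This is a based homotopy in $\Ran_{\leqslant 2}(S^1)$ from $\alpha$ (followed by a constant path) to $g\cdot\alpha^{-1}$, hence $[g]=[\alpha]^2$. That is exactly the paper's first homotopy $i(\gamma)\simeq\gamma'\circ\gamma'$.

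\textbf{Steps 2 and 3.} Step 2 is where the third point must enter; it corresponds to the paper's $\gamma'\simeq *$, drawn in \cref{fig_hom}. You only describe a ``sliding'' in words and write down no homotopy. Moreover, ``$\alpha\simeq *$, or equivalently $\alpha\simeq\alpha'$'' is not an equivalence. Since $\alpha'$ is $\alpha$ run backwards, $\alpha\simeq\alpha'$ only gives $[\alpha]^2=1$. Step 3 is openly conditional. As written, the proposal establishes no relation in $\pi_1(\Ran_{\leqslant 3}(S^1))$ that uses a third point, and that is the whole lemma.

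\textbf{A fix.} The missing ingredient is small. Change your naive attempt to $H(s,t)=\{\gamma(st),\gamma(t),\gamma(s+t)\}$. Then $H(0,\cdot)=\alpha$, $H(1,\cdot)=g$, and $H(s,0)=H(s,1)=\{1,\gamma(s)\}$. So $H$ is a free homotopy in $\Ran_{\leqslant 3}(S^1)$ whose basepoint track is $\alpha$ itself. This gives $[\alpha]=[\alpha][g][\alpha]^{-1}$, i.e.\ $[g]=[\alpha]$. Combined with the two-point relation $[g]=[\alpha]^2$, this forces $[\alpha]=1$ and hence $[g]=1$.

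Alternatively, take endpoint-fixing reparametrizations $\phi_s,\psi_s$ deforming the identity to $\min(2t,1)$ and $\max(2t-1,0)$ respectively. Then $\{1,\gamma(\phi_s(t)),\gamma(\psi_s(t))\}$ is a based homotopy from $\alpha$ to $\alpha\cdot\alpha$, giving $[\alpha]=[\alpha]^2$ directly. Either way the argument has the paper's two-stage shape: a two-point relation writing $g$ as a square, and a three-point relation that kills it.
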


\begin{proof}
Let $\gamma \colon S^1 \to \Ran_{\leqslant 1}(S^1) \cong S^1$.
As $\pi_1(S^1)= \Z$ with generator a single loop around $S^1$ and group operation the concatenation of paths, without loss of generality, let $\gamma$ be a single loop around $S^1$.
Homotopies $i(\gamma) \simeq \gamma'\circ \gamma'$ and $\gamma'\simeq *$ are constructed visually in \cref{fig_hom}, resulting in $i(\gamma)\simeq *$, for $i\colon \Ran_{\leqslant 1}(S^1) \to \Ran_{\leqslant 3}(S^1)$ the natural inclusion.
The homotopies are drawn in $S^1\times [0,1] \simeq S^1$, to better demonstrate the individual steps.
\end{proof}

\begin{figure}[hbtp!]
    \centering
    \input{figures/fig-homotopy.tex}
    \captionsetup{width=.9\linewidth}
    \caption{
    Path homotopies for the proof of \cref{prop_s1prop1}.
    A small separation between common endpoints of paths is used to visually distinguish the different paths, even though the endpoints should be considered as coincidental.
    }
    \label{fig_hom}
\end{figure} 

Note that the homotopy presented in \cref{fig_hom} is not basepoint-preserving, though as $\Ran_{\leqslant 3}(S^1) \cong S^3$ by \cite{bott}, any homotopy of paths can be made basepoint-preserving to any basepoint.

\begin{figure}[hbtp!]
    \centering
    \includegraphics{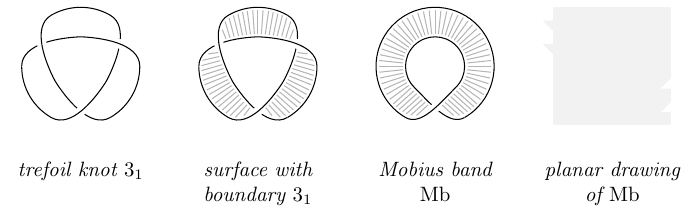}
    \captionsetup{width=.9\linewidth}
    \caption{Configuration spaces of one and two points of $S^1$, considered as subspaces of $\Ran_{\leqslant 3}(S^1)\simeq S^3$.}
    \label{fig_mb1}
\end{figure} 

\begin{remark}
\label{rem_ranloops}
The natural inclusion $i\colon \Ran_{\leqslant 1}(S^1) \to \Ran_{\leqslant 3}(S^1)$ used in \cref{prop_s1prop1} is known to have image the trefoil knot, by Shchepin  \cite{mostovoy}, as the boundary of the non-orientable surface $\Ran_{\leqslant 2}(S^1) \subseteq \Ran_{\leqslant 3}(S^1)$.
This confirms the observation of Ghys \cite{ghys} that $\Ran_{\leqslant 1}(S^2) \cong \Mb$, as the Mobius band is homeomorphic to the Mobius band with two more twists (see \cref{fig_mb1}).
As a result, we may interpret the constructions of several loops on $S^1$ from \cref{fig_hom} as instead constructions of single loop on $\Mb$.
For a better sense of the relationship, we note that 
\begin{equation}
\raisebox{-.9cm}{\includegraphics{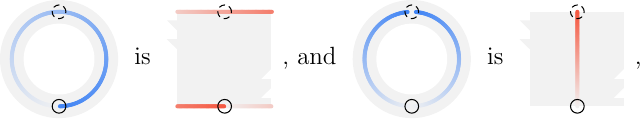}}
\end{equation}
with a pair of opposite points on $S^1$ emphasized.
A full reinterpretation of the homotopies presented \cref{fig_hom}, as they appear on $\Mb$, is given in \cref{fig_hommb}.
\end{remark}

\begin{figure}[hbtp!]
    \centering
    \input{figures/fig-mb.tex}
    \captionsetup{width=.9\linewidth}
    \caption{
    The homotopies of \cref{fig_hom} as they appear in the Ran space.
    The first line corresponds to the first line of \cref{fig_hom}, if the copy of $\gamma$ is rotated clockwise.
    The second line corresponds to the second line of \cref{fig_hom}, if the image of the pinched-off loop is half of $S^1$, and if it is moved clockwise around the other loop. 
    The third line heuristically corresponds to the first homotopy in the third line of \cref{fig_hom}, by considering $S^3$ as the compactification of $\R^3$.
    The fourth line corresponds to the rest of the homotopies in \cref{fig_hom}.
    }
    \label{fig_hommb}
\end{figure} 

An equivalent way to state \cref{prop_s1prop1} is that for any continuous $\sigma \colon S^1 \to \Ran_{\leqslant 3}(S^1)$ which factors through $\Ran_{\leqslant 1}(X)$, the loop $\sigma$ is contractible.
That is, the diagram
\begin{equation}
    \begin{tikzpicture}[xscale=3.5,yscale=1.5,baseline=.75cm]
\node (s1) at (0,1) {$S^1$};
\node (p) at (0,0) {$*$};
\node (ran3) at (1,0) {$\Ran_{\leqslant 3}(S^1)$};
\node (ran1) at (1,1) {$\Ran_{\leqslant 1}(S^1)$};
\draw[->] (s1) to node[left] {0} (p);
\draw[->] (p) to (ran3);
\draw[->] (s1) to node[above] {$\sigma$} (ran1);
\draw[->] (ran1) to node[right] {$i$} (ran3);
\node at (.5,.5) {$\simeq$};
\end{tikzpicture}
    \label{diag_commsq2}
\end{equation}
commutes up to homotopy.

\begin{proposition}
\label{cor_xcor}
The map on homotopy groups $\pi_1(\Ran_{\leqslant 1}(X)) \rightarrow \pi_1(\Ran_{\leqslant 3}(X))$ induced by inclusion of spaces is trivial.
\end{proposition}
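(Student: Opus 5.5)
The plan is to reduce to \cref{prop_s1prop1} by naturality of the Ran construction. Any continuous map $f\colon Y\to X$ induces a continuous map $\Ran_{\leqslant n}(f)\colon \Ran_{\leqslant n}(Y)\to\Ran_{\leqslant n}(X)$, $y\mapsto f(y)$. Taking images can only shrink cardinality, so this map preserves the filtration. It also commutes with the inclusions $i$.

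\textbf{Reduction to the circle.} Since $\Ran_{\leqslant 1}(X)\cong X$ via $\iota$, a class in $\pi_1(\Ran_{\leqslant 1}(X))$ is represented by $\iota\circ\gamma$ for a loop $\gamma\colon S^1\to X$. Regard $\gamma$ as a continuous map of spaces $S^1\to X$. We get a commutative square
\[
\begin{array}{ccc}
\Ran_{\leqslant 1}(S^1) & \xrightarrow{\ i\ } & \Ran_{\leqslant 3}(S^1)\\
\downarrow & & \downarrow\\
\Ran_{\leqslant 1}(X) & \xrightarrow{\ i\ } & \Ran_{\leqslant 3}(X)
\end{array}
\]
whose vertical arrows are $\Ran_{\leqslant 1}(\gamma)$ and $\Ran_{\leqslant 3}(\gamma)$. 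The loop $i\circ\iota\circ\gamma$ equals $\Ran_{\leqslant 3}(\gamma)\circ i\circ \iota_{S^1}\circ\id_{S^1}$, where $\id_{S^1}$ is the generator of $\pi_1(\Ran_{\leqslant 1}(S^1))\cong\Z$.

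\textbf{Applying the circle case.} By \cref{prop_s1prop1} (in its diagram form), $i\circ\iota_{S^1}\circ \id_{S^1}$ is null-homotopic in $\Ran_{\leqslant 3}(S^1)$. Composing that explicit homotopy $H\colon S^1\times[0,1]\to\Ran_{\leqslant 3}(S^1)$ with $\Ran_{\leqslant 3}(\gamma)$ gives a null-homotopy of $i\circ\iota\circ\gamma$ in $\Ran_{\leqslant 3}(X)$. This is constructive, being the image of the pictures in \cref{fig_hom}.

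\textbf{Basepoints.} The homotopy in \cref{fig_hom} is free rather than based. A freely null-homotopic loop is trivial in $\pi_1$ of any basepoint: a free null-homotopy conjugates the loop to a constant one, and the only conjugate of the identity is the identity. So the induced map on $\pi_1$ is trivial for every basepoint. Since $X$ is path connected, there is no dependence on the choice.

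\textbf{Main obstacle: continuity of $\Ran_{\leqslant n}(f)$.} This must be checked with the topology defined in the introduction. Take a subbasic open set determined by pairwise disjoint open sets $U_1,\dots,U_k\subseteq X$. Its preimage consists of the finite $y\subseteq Y$ with $y\subseteq\bigcup_j f^{-1}(U_j)$ and $y\cap f^{-1}(U_j)\neq\emptyset$ for all $j$. The sets $f^{-1}(U_j)$ are open and pairwise disjoint, but some may be empty. If one is empty, the preimage is empty, hence open. Otherwise discard nothing: the preimage is exactly the subbasic open set in $\Ran(Y)$ determined by the $f^{-1}(U_j)$, intersected with $\Ran_{\leqslant n}(Y)$. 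So it is open, and $\Ran_{\leqslant n}(f)$ is continuous.
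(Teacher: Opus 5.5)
Your proof is correct and follows the paper's own argument: it pushes the circle case of \cref{prop_s1prop1} forward along the induced maps $f_\sigma\colon \Ran_{\leqslant n}(S^1)\to\Ran_{\leqslant n}(X)$, using the same commuting square with the inclusions $i$. Your explicit continuity check for the induced map and your conjugation argument for the free (non-based) null-homotopy fill in details the paper leaves implicit; the paper handles the basepoint issue only by appealing to $\Ran_{\leqslant 3}(S^1)\cong S^3$, so your version of that step is more self-contained.
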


\begin{proof}
Let $\sigma\colon S^1 \to \Ran_{\leqslant 1}(X)$.
As $\Ran_{\leqslant 1}(X) \cong X$, we have a continuous map $f\colon S^1 \to X$, which induces continuous maps $f_\sigma \colon \Ran_{\leqslant n}(S^1) \to \Ran_{\leqslant n}(X)$, for every positive integer $n$.
Combining the diagram from \cref{diag_commsq2} with the $\sigma$ and $f_\sigma$ maps, we have a diagram
\begin{equation}
    \begin{tikzpicture}[
  xscale=3.5,
  yscale=1.5,
  baseline=-.05cm,
  curvedconx/.style={->,rounded corners=10pt}
]
\node (s1) at (0,0) {$S^1$};
\node (ranx1) at (1,1) {$\Ran_{\leqslant 1}(X)$};
\node (ranx3) at (2,1) {$\Ran_{\leqslant 3}(X)$};
\node (rans1) at (1,0) {$\Ran_{\leqslant 1}(S^1)$};
\node (rans3) at (2,0) {$\Ran_{\leqslant 3}(S^1)$};
\node (p) at (1,-1) {$*$};
\draw[curvedconx] (s1) to node[above] {$\id$} (rans1);
\draw[curvedconx] (p) -| (rans3);
\draw[curvedconx] (s1) |- node[above,pos=.79] {$\sigma$} (ranx1);
\draw[curvedconx] (s1) |- node[above,pos=.71] {$0$} (p);
\draw[->] (ranx1) to node[above] {$i$} (ranx3);
\draw[->] (rans1) to node[above] {$i$} (rans3);
\draw[->] (rans1) to node[right] {$f_\sigma$} (ranx1);
\draw[->] (rans3) to node[right] {$f_\sigma$} (ranx3);
\node at (1,-.5) {$\simeq$};
\end{tikzpicture}
    \label{diag_commsq3}
\end{equation}
that commutes up to homotopy in the bottom rectangle, and commutes without qualification in the top rectangles.
The claim follows by factoring $i\circ \sigma \colon S^1\to \Ran_{\leqslant 3}(X)$ through the point $*$ in this diagram.
\end{proof}

\section{Main results}
\label{sec_contruxx}

Our first main result generalizes \cref{cor_xcor} to larger Ran spaces.

\begin{theorem}
\label{thm_xthm1}
The map $\pi_1(\Ran_{\leqslant n}(X)) \rightarrow \pi_1(\Ran_{\leqslant n+2}(X))$ induced by inclusion of spaces is trivial, for all positive integers $n$.
\end{theorem}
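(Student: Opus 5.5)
We plan to reduce the statement to \cref{cor_xcor} by splitting a loop in $\Ran_{\leqslant n}(X)$ into single-point strands and contracting them one at a time. Let $\sigma\colon S^1\to\Ran_{\leqslant n}(X)$. By \cref{lem_xlem} we may replace $\sigma$ by a homotopic loop $\hat\sigma$ which factors through $X^n$ and satisfies $\hat\sigma(0)=\{b\}$. Concretely, $\hat\sigma(t)=\{\hat\sigma_1(t),\dots,\hat\sigma_n(t)\}$ for loops $\hat\sigma_j\colon S^1\to X$, all based at $b$. Since the branch and merge points of $\hat\sigma$ have been pushed to $b$, we may further reparametrize so that $\hat\sigma$ is a concatenation of loops. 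On the $j$-th time interval, all strands but one sit at $b$ and the $j$-th strand traverses $\hat\sigma_j$. In other words, $\hat\sigma\simeq\tau_1\cdot\tau_2\cdots\tau_n$, where $\tau_j(t)=\{b,\hat\sigma_j(t)\}$. The homotopy is the straight-line reparametrization in $(S^1)^n$, which shows $\triangle\simeq$ the concatenation of the coordinate loops, pushed forward through $\iota$. Every intermediate configuration has at most $n$ points, so this homotopy stays inside $\Ran_{\leqslant n}(X)$.

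It then suffices to show that each $\tau_j$ is null-homotopic in $\Ran_{\leqslant n+2}(X)$; in fact we aim to show this already in $\Ran_{\leqslant 4}(X)$ when $n\geqslant 2$. The loop $\tau_j$ lies in the subspace of configurations containing $b$, which has two points. We will apply the contraction of \cref{cor_xcor} to the single moving strand $\hat\sigma_j$, viewed in $\Ran_{\leqslant 1}(X)$. This gives a null-homotopy $H\colon S^1\times[0,1]\to\Ran_{\leqslant 3}(X)$, and $f_\sigma$ is compatible with taking unions. We then take the union with the constant point $b$, setting $H'(s,u)=H(s,u)\cup\{b\}$. This map is continuous: for this topology, union with a fixed finite set is continuous on $\Ran(X)$, and we would verify this directly from the basic open sets. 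It lands in $\Ran_{\leqslant 4}(X)\subseteq\Ran_{\leqslant n+2}(X)$ whenever $n\geqslant 2$, and its boundary loop is $\tau_j$. For $n=1$ the statement is exactly \cref{cor_xcor}.

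The main obstacle is the basepoint and size bookkeeping. First, the null-homotopies from \cref{fig_hom} are not basepoint-preserving. A free null-homotopy of each $\tau_j$ only makes it conjugate to trivial, which is still trivial, so a product of null-homotopic loops is null-homotopic; I would spell this out carefully. Second, the reduction to the $\tau_j$ needs care. Parking strands at $b$ means several strands coincide, which reduces the cardinality, so that step is harmless. The concern is instead the fuller homotopy in which one strand is contracted while the other $n-1$ strands are still moving, which would need $n+2$ points in total. That version would in fact explain the index $n+2$. Using the sequential decomposition avoids this, and I would double-check that the reparametrization in the first step never needs more than $n$ points, which holds because it is a pushforward from $(S^1)^n$.
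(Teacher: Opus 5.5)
Your argument is correct, but it takes a different route from the paper's proof of this theorem. It is essentially the argument the paper gives afterwards for \cref{thm_xthm2} and \cref{thm_xthm3}.

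In the paper's proof all strands keep moving. First, $\hat\sigma$ is factored through $\Ran_{\leqslant n-1}(X)\times\Ran_{\leqslant 1}(X)$ followed by the union map. The last strand is then contracted in the second factor by \cref{cor_xcor}, while the other $n-1$ strands continue to move, and the result is sent back into $\Ran_{\leqslant n+2}(X)$ by union. This is iterated on the remaining strands together with the parked point $b$. The count $(n-1)+3$ is exactly where the index $n+2$ comes from, as you suspected.

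You instead first separate the strands in time, so that $[\hat\sigma]=[\tau_1]\cdots[\tau_n]$ with each $\tau_j$ a loop of at most two points. Only then do you contract, and this needs only $\Ran_{\leqslant 4}(X)$. Your route buys more than the statement: for $n\geqslant 2$ the loop already becomes trivial in $\Ran_{\leqslant \max(n,4)}(X)$, which gives \cref{thm_xthm3} at the same time. Your homotopy from the diagonal to the staircase loop is a straight-line homotopy rel endpoints, pushed through $(S^1)^n\to X^n\to\Ran_{\leqslant n}(X)$. It also makes explicit why the componentwise reparametrization in \cref{thm_xthm2} really is a homotopy of loops in $\Ran_{\leqslant n}(X)$. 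The paper's route, in exchange, needs no sequentialization and works directly with the union (codiagonal) factorization, at the cost of the weaker bound $n+2$.

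Two small corrections. The straight line should be taken in the cube $[0,1]^n$ (or in $\R^n$), not in $(S^1)^n$. Your basepoint worry is also unnecessary: \cref{cor_xcor} is a statement about $\pi_1$, so it already gives a null-homotopy of $\hat\sigma_j$ based at $\{b\}$, and this stays based after taking the union with $\{b\}$.
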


\begin{proof}
The case $n=1$ is precisely \cref{cor_xcor}, so we assume $n>1$. 
Let $\sigma$ be a loop in $\Ran_{\leqslant n}(X)$, for which we will show that $i(\sigma)$ factors through the point $*$.
Let $\hat\sigma \simeq \sigma$ be the loop constructed by \cref{lem_xlem}.
The map $\iota$ in the diagram in \cref{diag_commsq1} factors canonically through $\Ran_{\leqslant n-1}(X) \times X \cong \Ran_{\leqslant n-1}(X) \times \Ran_{\leqslant 1}(X)$ as $\codiag (\iota,\id)$.
Write $\hat\sigma'$ for the map $S^1 \to \Ran_{\leqslant n-1}$ in the first coordinate of this factoring.
As in \cref{diag_commsq1}, the top left side of the diagram
\begin{equation}
    \begin{tikzpicture}[
  xscale=3.3,
  yscale=1.8,
  baseline=-.1cm
]
\node (s1) at (0,1) {$S^1$};
\node (s1k) at (0,-1) {$S^1 \times S^1$};
\node (rank) at (1,.2) {$\Ran_{\leqslant n-1}(X) \times \Ran_{\leqslant 1}(X)$};
\node (rann) at (1,1) {$\Ran_{\leqslant n}(X)$};
\node (p) at (1,-1.8) {$S^1 \times *$};
\node (rann2) at (2,1) {$\Ran_{\leqslant n+2}(X)$};
\node (rank2) at (2,-1) {$\Ran_{\leqslant n-1}(X) \times \Ran_{\leqslant 3}(X)$};
\draw[->] (s1) to node[left] {$\triangle$} (s1k);
\draw[->] (s1k) to node[auto,swap] {$(\hat\sigma',\hat\sigma_n)$} (rank);
\draw[->] (s1) to node[above] {$\sigma$} (rann);
\draw[->] (rank) to node[right] {$\codiag$} (rann);
\draw[->] (rann) to node[above] {$i$} (rann2);
\draw[->] (rank2) to node[right] {$\codiag$} (rann2);
\draw[->] (rank) to node[auto,swap] {$(\id,i)$} (rank2);
\draw[->,rounded corners=10pt] (s1k) |- node[above,pos=.7] {$(\id,0)$} (p);
\draw[->,rounded corners=10pt] (p) -| node[above,pos=.22] {$(\hat\sigma',b)$} (rank2);
\node at (.5,.7) {$\simeq$};
\node at (1,-1.3) {$\simeq$};
\end{tikzpicture}
    \label{diag_corrdiag}
\end{equation}
commutes up to homotopy, and the top right side commutes as including into a larger space and taking the union commutes.
The bottom of this diagram is taken from the diagram in \cref{diag_commsq3}, when considered on the second coordinate of the diagonal map.
The element $b\in X$ denotes the basepoint of $\hat\sigma$.

We now have that passing $i(\sigma)$ through the diagonal, it factors through a continuous map $S^1 \to \Ran_{\leqslant n-1}(X)$ in the first coordinate, and $S^1\to *$ in the second.
Repeating the factorization of \cref{diag_corrdiag}, next with the composition $S^1 \to \Ran_{\leqslant n-1}(X) \to \Ran_{\leqslant n+2}(X)$, we will arrive at a factorization $S^1 \to * \to \Ran_{\leqslant n+2}(X)$, up to homotopy.
\end{proof}

\begin{theorem}
\label{thm_xthm2}
For every continuous $\sigma \colon S^1\to \Ran_{\leqslant n}(X)$, there exists a loop homotopic to $\sigma$ which factors through $\Ran_{\leqslant 2}(X)$, for every positive integer $n$.
\end{theorem}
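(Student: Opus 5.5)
We argue by induction on $n$, reducing the size bound by one at each step while staying inside $\Ran_{\leqslant n}(X)$. For $n\leqslant 2$ there is nothing to prove. For $n\geqslant 3$, first replace $\sigma$ by the loop $\hat\sigma$ of \cref{lem_xlem}. This loop is homotopic to $\sigma$ within $\Ran_{\leqslant n}(X)$, satisfies $\hat\sigma(0)=\{b\}$, and factors as $\iota\circ(\hat\sigma_1,\dots,\hat\sigma_n)$ through $X^n$, where each component $\hat\sigma_j$ is a loop in $X$ based at $b$. As in the proof of \cref{thm_xthm1}, we write this as $\codiag(\hat\sigma',\hat\sigma_n)$ with $\hat\sigma'\colon S^1\to\Ran_{\leqslant n-1}(X)$. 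The goal is a homotopy in $\Ran_{\leqslant n}(X)$ from this loop to one whose image has size at most $n-1$ at every time. Iterating then brings the loop into $\Ran_{\leqslant 2}(X)$.

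The key step merges two strands. We reparametrize the components so that they are traversed one after another rather than simultaneously. Concretely, we homotope $\codiag(\hat\sigma_1,\dots,\hat\sigma_n)$ to the loop that first runs $\hat\sigma_1$ while the other strands rest at $b$, then runs $\hat\sigma_2$, and so on. Because every strand is based at $b$, the idle strands coincide with $b$. At every time the configuration is then $\{\hat\sigma_j(s), b\}$, of size at most $2$.

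This homotopy is the standard one $(s,t)\mapsto (\hat\sigma_1(\phi_1(s,t)),\dots,\hat\sigma_n(\phi_n(s,t)))$ in $X^n$, where the $\phi_j$ interpolate between the diagonal parametrization and the staggered one. We compose it with $\iota$, which is continuous from $X^n$ to $\Ran_{\leqslant n}(X)$, so the whole homotopy stays inside $\Ran_{\leqslant n}(X)$. This is exactly the Eckmann--Hilton style argument that a loop in a product is homotopic to the concatenation of its coordinate loops. It produces the concatenation $\iota\{\hat\sigma_1,b\}*\cdots*\iota\{\hat\sigma_n,b\}$, which factors through $\Ran_{\leqslant 2}(X)$, so no induction is actually needed.

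The main obstacle is not this reparametrization but the input it relies on. The argument needs $\hat\sigma$ to genuinely factor through $X^n$ with all components returning to $b$ at a common basepoint. That is exactly what \cref{lem_xlem} supplies: monodromy and branch and merge points are handled there by pushing them to $b$. So I would check carefully that the components $\hat\sigma_j$ provided by \cref{lem_xlem} are honest loops at $b$, possibly after inserting the path from $b$ to $\sigma(0)$ into each strand. If this is fine, the concatenated loop lies in $\Ran_{\leqslant 2}(X)$, and it is homotopic to $\sigma$ in $\Ran_{\leqslant n}(X)$.
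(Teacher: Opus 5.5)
Your proposal is correct and follows essentially the paper's argument. You take $\hat\sigma$ from \cref{lem_xlem}, whose components are loops at the common basepoint $b$, and stagger them so that at most one strand moves at a time; every configuration is then $\{\hat\sigma_j(s), b\}$, of size at most $2$, and as you noticed the induction you set up at the start is unnecessary. Your explicit interpolation $\phi_j$ between the diagonal and staggered parametrizations, pushed forward along the continuous map $\iota\colon X^n\to\Ran_{\leqslant n}(X)$, also spells out the homotopy that the paper leaves implicit in the word ``reparametrize''.
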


\begin{proof}
Let $\hat\sigma$ be the loop associated to $\sigma$, as guaranteed by \cref{lem_xlem}, with constituent maps $\hat\sigma_j\colon S^1\to X$.
Reparametrize every $\hat\sigma_j$ as
\begin{equation}
\hat\sigma_j'(t) = \begin{cases}
\hat\sigma_j(0) & \text{\ if\ } t \leqslant \frac jn,\\
\hat\sigma_j(nt-j) & \text{\ if\ } t \in[ \frac jn,\frac{j+1}n], \\
\hat\sigma_j(1) & \text{\ if\ } t \geqslant \frac {j+1}n.
\end{cases}
\hspace{.5cm}
\begin{tikzpicture}[baseline=.6cm,xscale=.9]
\draw[->] (0,-.2) -- (0,1.7);
\draw[->] (-.2,0) -- (3.2,0) node[right] {$t$};
\foreach \x\l in {0/0, .7/{j}, 1.1/{j+1}, 3/{n}}{
    \draw[dashed] (\x,1.5) -- (\x,-.1) node[below,scale=.8] {$\frac{\l}{n}$};
}
\node[scale=.8] at (-.4,0) {$0$};
\node[scale=.8] at (-.4,1.5) {$1$};
\draw (0,1.5)--(-.2,1.5);
\draw[line width=1.5pt,gblue] (0,0) -- (.7,0) -- (1.1,1.5) -- (3,1.5);
\end{tikzpicture}
\end{equation}
These reparametrizations make sense, as each $\hat\sigma_j$ coincides with itself and with every other $\hat\sigma_{j'}$ at $t=0,1$.
Note that at every $t\in [0,1]$, at most exactly one $\hat\sigma'_j$ is at a value other than $\hat\sigma_j(0)$ or $\hat\sigma_j(1)$, hence $|\{\hat\sigma'_j(t)\ :\ j=1,\dots,n\}| \leqslant 2$ for all $t\in [0,1]$.
Let $\hat\sigma' \colon S^1 \to \Ran_{\leqslant n}(X)$ be defined by the composition
\begin{equation}
  \begin{tikzpicture}[baseline=-3pt,xscale=2]
  \node (s1) at (0,0) {$S^1$\vphantom{Ap}};
  \node[anchor=west] (s1n) at ($(s1.east)+(0:.6)$) {$(S^1)^n$\vphantom{Ap}};
  \node[anchor=west] (xn) at ($(s1n.east)+(0:1.4)$) {$X^n$\vphantom{Ap}};
  \node[anchor=west] (ran) at ($(xn.east)+(0:.6)$) {$\Ran_{\leqslant n}(X)$\vphantom{Ap}.};
  \draw[->] (s1) to node[above] {$\triangle$} (s1n);
  \draw[->] (s1n) to node[above] {$(\hat\sigma'_1,\dots,\hat\sigma'_n)$} (xn);
  \draw[->] (xn) to node[above] {$\iota$} (ran);
  \end{tikzpicture}
\end{equation}
The previous observation that there are at most 2 unique elements among all the $\hat\sigma'_j(t)$ for every $t\in [0,1]$ implies that $\hat\sigma'$ factors through $\Ran_{\leqslant 2}(X)$, as desired.
\end{proof}

With this result, we have an explicit construction to contract every loop in $\Ran_{\leqslant n}(X)$: decompose it into its constituent loops by \cref{lem_xlem}, factor it through $\Ran_{\leqslant 2}(X)$ by \cref{thm_xthm2}, and contract each reparametrized constituent loop by \cref{fig_hom}.
This leads us to our final result about simple connectedness of all but finitely many Ran spaces.

\begin{theorem}
\label{thm_xthm3}
For every positive integer $n\geqslant 4$, $\pi_1(\Ran_{\leqslant n}(X)) = 0$.
\end{theorem}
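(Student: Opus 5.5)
The plan is to combine \cref{thm_xthm2} with \cref{thm_xthm1} in the case $n=2$. Fix $n\geqslant 4$ and a loop $\sigma\colon S^1\to \Ran_{\leqslant n}(X)$. By \cref{thm_xthm2}, $\sigma$ is homotopic, through loops in $\Ran_{\leqslant n}(X)$, to a loop $\hat\sigma'$ whose image lies in $\Ran_{\leqslant 2}(X)$. Write $\hat\sigma' = i\circ\rho$, where $\rho\colon S^1\to\Ran_{\leqslant 2}(X)$ and $i\colon \Ran_{\leqslant 2}(X)\hookrightarrow \Ran_{\leqslant n}(X)$ is the inclusion. It then suffices to show that $i\circ\rho$ is null-homotopic in $\Ran_{\leqslant n}(X)$.

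Next, apply \cref{thm_xthm1} with $n=2$. It says that the inclusion $\Ran_{\leqslant 2}(X)\hookrightarrow \Ran_{\leqslant 4}(X)$ induces the trivial map on $\pi_1$, so $\rho$ becomes null-homotopic in $\Ran_{\leqslant 4}(X)$. Since $n\geqslant 4$, the inclusion $\Ran_{\leqslant 2}(X)\hookrightarrow\Ran_{\leqslant n}(X)$ factors as $\Ran_{\leqslant 2}(X)\hookrightarrow\Ran_{\leqslant 4}(X)\hookrightarrow\Ran_{\leqslant n}(X)$. Pushing the null-homotopy forward along the second inclusion shows that $i\circ\rho\simeq *$ in $\Ran_{\leqslant n}(X)$. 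Hence $\sigma\simeq *$, and $\pi_1(\Ran_{\leqslant n}(X))=0$ because $\sigma$ was arbitrary.

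The main point to check is basepoints. Both \cref{thm_xthm2} and \cref{thm_xthm1} produce free homotopies of loops, not based ones, and \cref{lem_xlem} moves the basepoint along a path. A free null-homotopy of a loop implies that its based class is trivial: a loop freely homotopic to a constant loop is conjugate to the trivial element, hence trivial. So triviality of every class in $\pi_1$ follows regardless of basepoint. Since $X$ is path connected, $\Ran_{\leqslant n}(X)$ is path connected as well: any configuration can be joined to a singleton by moving its points along paths to a common point. So $\pi_1$ is independent of the choice of basepoint, and the statement is unambiguous.
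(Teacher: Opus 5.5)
Your proposal is correct and is essentially the paper's argument. You factor the loop through $\Ran_{\leqslant 2}(X)$ by \cref{thm_xthm2} and then contract it in $\Ran_{\leqslant 4}(X)\subseteq\Ran_{\leqslant n}(X)$ using the $n=2$ case of \cref{thm_xthm1}. This is what the paper means by contracting each reparametrized constituent loop via \cref{fig_hom}, and it is why the threshold is $n\geqslant 4 = 2+2$. Your remark that a free null-homotopy already gives triviality in $\pi_1$ fills in a basepoint detail that the paper leaves implicit.
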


Note that this immediately implies \cref{thm_xthm1} for $n \geqslant 4$.

\begin{observation}
\cref{thm_xthm3} does not hold for $n=1,2$ in particular for $X=S^1$, as previously observed by direct computation of the spaces \cite{MR1998017}.
In particular:
\begin{itemize}
    \item $\Ran_{\leqslant 1}(X) = \Conf_1(X) \cong X$ does not have trivial first homotopy group, if $X$ does not have trivial first homotopy group; and
    \item $\Ran_{\leqslant 2}(S^1) \cong (S^1\times S^1)/((x,y)\sim (y,x))$ which is the Mobius band, which also does not have trivial first homotopy group.
\end{itemize}
While \cref{thm_xthm3} holds for $n=3$ and $X=S^1$, as $\Ran_{\leqslant 3}(S^1)\simeq S^2$, it is not clear if it holds for arbitrary $X$ at $n=3$. 
\end{observation}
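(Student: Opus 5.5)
The observation has three claims: the failure for $n=1$, the failure for $n=2$ when $X=S^1$, and the positive statement for $n=3$ when $X=S^1$. I will verify each by identifying the relevant Ran space with a familiar space up to homeomorphism or homotopy equivalence.

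\emph{The case $n=1$.} The map $\iota\colon X\to\Ran_{\leqslant 1}(X)$, $x\mapsto\{x\}$, is a continuous bijection. To see it is a homeomorphism, I check that the topology on $\Ran(X)$ restricted to singletons is generated by the sets $\{\{x\} : x\in U\}$ with $U\subseteq X$ open. Indeed, a singleton lies in a basic open set of $\Ran(X)$ only if that set is built from a single $U_i$, since the singleton must meet every $U_i$ and the $U_i$ are pairwise disjoint. Hence $\pi_1(\Ran_{\leqslant 1}(X))\cong\pi_1(X)$. This is nontrivial whenever $\pi_1(X)$ is, for instance $\pi_1(S^1)=\Z$.

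\emph{The case $n=2$ with $X=S^1$.} Consider the quotient map $q\colon (S^1)^2\to\Ran_{\leqslant 2}(S^1)$, $(x,y)\mapsto\{x,y\}$. I will argue it is a continuous surjection from a compact space to a Hausdorff space, since the Ran topology on $S^1$ agrees with the Hausdorff metric. Its fibres are exactly the orbits of the swap $(x,y)\mapsto(y,x)$, so it induces a homeomorphism $\mathrm{SP}^2(S^1)=(S^1\times S^1)/((x,y)\sim(y,x))\cong\Ran_{\leqslant 2}(S^1)$.

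To identify this with the Möbius band, I cut the torus along the diagonal. Writing points of $S^1$ as angles, the map $\{x,y\}\mapsto$ (midpoint, half the arc length) exhibits the space as an interval bundle over a circle. Going once around the base swaps the two ends of the interval, so the bundle is nonorientable, which gives $\Mb$. Since $\Mb\simeq S^1$, we get $\pi_1=\Z\neq0$. This step is the main obstacle: the coordinate choice has to be handled carefully near the diagonal and near antipodal pairs. I will cite the classical computation \cite{MR1998017} for the formal homeomorphism rather than redo it.

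\emph{The case $n=3$ with $X=S^1$.} Here I invoke Bott's theorem \cite{bott}, already used after \cref{prop_s1prop1}, which gives $\Ran_{\leqslant 3}(S^1)\cong S^3$. Therefore $\pi_1(\Ran_{\leqslant 3}(S^1))=0$. (The observation writes $S^2$, but either sphere has trivial $\pi_1$, so the conclusion is unaffected.)

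The remaining sentence, that it is unclear whether the statement holds for arbitrary $X$ at $n=3$, is a remark rather than a claim, so nothing needs proving there. I would only note why the methods above fall short. \cref{thm_xthm2} reduces any loop to one in $\Ran_{\leqslant 2}(X)$, but contracting a loop lying in $\Ran_{\leqslant 2}(X)$ by \cref{thm_xthm1} requires room in $\Ran_{\leqslant 4}(X)$.
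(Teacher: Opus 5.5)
Your proposal is correct and takes the same approach as the paper, which identifies $\Ran_{\leqslant 1}(X)\cong X$, $\Ran_{\leqslant 2}(S^1)\cong (S^1\times S^1)/((x,y)\sim(y,x))\cong\Mb$ (citing Tuffley) and $\Ran_{\leqslant 3}(S^1)$ with a sphere via Bott. You also supply point-set justifications the paper leaves implicit: restricting the subbasis to singletons, and using the compact-to-Hausdorff quotient map. You are right that Bott's theorem gives $S^3$, as the paper itself states after \cref{prop_s1prop1}, so the $S^2$ in the observation is a slip that does not affect the conclusion.
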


\section*{Acknowledgements}
This note is based in part on the PhD thesis of the author \cite[Section 4.3]{Lazovskis2019}.
The author thanks Ambrose Yim for a motivating discussion prompting to resurrect this long-dormant project, Sadok Kallel for providing more background on related work, and Sylvain Douteau for suggesting the interpretation of \cref{rem_ranloops}.
The author is supported by the Latvian Council of Science (LZP) 1.1.1.9 Research application No 1.1.1.9/LZP/1/24/125 of the Activity ``Post-doctoral Research" ``Efficient topological signatures for representation learning in medical imaging". 

\bibliographystyle{alpha}
\bibliography{references}

@book {MR2058353,
    AUTHOR = {Beilinson, Alexander and Drinfeld, Vladimir},
     TITLE = {Chiral algebras},
    SERIES = {American Mathematical Society Colloquium Publications},
    VOLUME = {51},
 PUBLISHER = {American Mathematical Society, Providence, RI},
      YEAR = {2004},
     PAGES = {vi+375},
      ISBN = {0-8218-3528-9},
   MRCLASS = {17Bxx (14F43)},
  MRNUMBER = {2058353},
MRREVIEWER = {Francisco J. Plaza Mart\'\i n},
       DOI = {10.1090/coll/051},
       URL = {https://doi.org/10.1090/coll/051},
}

@phdthesis{Lazovskis2019,
    author  = "J\=anis Lazovskis",
    title   = "{Stability of Universal Constructions for Persistent Homology}",
    year    = "2019",
    address = {Chicago, IL, USA},
    school  = {University of Illinois at Chicago},
    type    = {Ph{D} thesis},
    doi     = "10.25417/uic.12481691.v1"
}

@article {MR1823966,
    AUTHOR = {Handel, David},
     TITLE = {Some homotopy properties of spaces of finite subsets of
              topological spaces},
   JOURNAL = {Houston J. Math.},
  FJOURNAL = {Houston Journal of Mathematics},
    VOLUME = {26},
      YEAR = {2000},
    NUMBER = {4},
     PAGES = {747--764},
      ISSN = {0362-1588},
   MRCLASS = {55Q52 (54B15 55P65 55R80)},
  MRNUMBER = {1823966},
MRREVIEWER = {Leonard\ R.\ Rubin},
}

@article {MR4871695,
    AUTHOR = {Cepek, Anna and Lejay, Damien},
     TITLE = {On the topologies of the exponential},
   JOURNAL = {Port. Math.},
  FJOURNAL = {Portugaliae Mathematica. A Journal of the Portuguese
              Mathematical Society},
    VOLUME = {82},
      YEAR = {2025},
    NUMBER = {1-2},
     PAGES = {1--30},
      ISSN = {0032-5155,1662-2758},
   MRCLASS = {55R80 (54A10 57N80 57R56)},
  MRNUMBER = {4871695},
       DOI = {10.4171/pm/2131},
       URL = {https://doi.org/10.4171/pm/2131},
}

@ARTICLE{dag4,
       author = {{Lurie}, Jacob},
        title = "{Derived Algebraic Geometry VI: E\_k Algebras}",
      journal = {arXiv e-prints},
         year = 2009,
        month = 10,
          eid = {arXiv:0911.0018},
        pages = {arXiv:0911.0018},
          doi = {10.48550/arXiv.0911.0018},
archivePrefix = {arXiv},
       eprint = {0911.0018},
 primaryClass = {math.AT},
       adsurl = {https://ui.adsabs.harvard.edu/abs/2009arXiv0911.0018L}
}

@book {MR375281,
    AUTHOR = {Birman, Joan S.},
     TITLE = {Braids, links, and mapping class groups},
    SERIES = {Annals of Mathematics Studies},
    VOLUME = {No. 82},
 PUBLISHER = {Princeton University Press, Princeton, NJ; University of Tokyo
              Press, Tokyo},
      YEAR = {1974},
     PAGES = {ix+228},
   MRCLASS = {55A25},
  MRNUMBER = {375281},
MRREVIEWER = {Wilbur\ Whitten},
}

@incollection {MR2605307,
    AUTHOR = {Cohen, Frederick R.},
     TITLE = {Introduction to configuration spaces and their applications},
 BOOKTITLE = {Braids},
    SERIES = {Lect. Notes Ser. Inst. Math. Sci. Natl. Univ. Singap.},
    VOLUME = {19},
     PAGES = {183--261},
 PUBLISHER = {World Sci. Publ., Hackensack, NJ},
      YEAR = {2010},
      ISBN = {978-981-4291-40-8; 981-4291-40-4},
   MRCLASS = {55R80 (20F36 55U10 57M07)},
  MRNUMBER = {2605307},
MRREVIEWER = {Don\ Shimamoto},
       DOI = {10.1142/9789814291415\_0003},
       URL = {https://doi.org/10.1142/9789814291415_0003},
}

@article {MR1998017,
    AUTHOR = {Tuffley, Christopher},
     TITLE = {Finite subset spaces of {$S^1$}},
   JOURNAL = {Algebr. Geom. Topol.},
  FJOURNAL = {Algebraic \& Geometric Topology},
    VOLUME = {2},
      YEAR = {2002},
     PAGES = {1119--1145},
      ISSN = {1472-2747,1472-2739},
   MRCLASS = {54B20 (55Q52 55R80 57M25)},
  MRNUMBER = {1998017},
MRREVIEWER = {Jacob\ Mostovoy},
       DOI = {10.2140/agt.2002.2.1119},
       URL = {https://doi.org/10.2140/agt.2002.2.1119},
}

@article {MR2105772,
    AUTHOR = {Tuffley, Christopher},
     TITLE = {Connectivity of finite subset spaces of cell complexes},
   JOURNAL = {Pacific J. Math.},
  FJOURNAL = {Pacific Journal of Mathematics},
    VOLUME = {217},
      YEAR = {2004},
    NUMBER = {1},
     PAGES = {175--179},
      ISSN = {0030-8730,1945-5844},
   MRCLASS = {55P99},
  MRNUMBER = {2105772},
       DOI = {10.2140/pjm.2004.217.175},
       URL = {https://doi.org/10.2140/pjm.2004.217.175},
}

@article{mostovoy,
 ISSN = {00029890, 19300972},
 URL = {http://www.jstor.org/stable/4145248},
 author = {Jacob Mostovoy},
 journal = {The American Mathematical Monthly},
 number = {4},
 pages = {357--360},
 publisher = {Mathematical Association of America},
 title = {Lattices in $\mathbb{C}$ and Finite Subsets of a Circle},
 urldate = {2026-02-10},
 volume = {111},
 year = {2004}
}

@incollection {MR2648705,
    AUTHOR = {F\'elix, Yves and Tanr\'e, Daniel},
     TITLE = {Rational homotopy of symmetric products and spaces of finite
              subsets},
 BOOKTITLE = {Homotopy theory of function spaces and related topics},
    SERIES = {Contemp. Math.},
    VOLUME = {519},
     PAGES = {77--92},
 PUBLISHER = {Amer. Math. Soc., Providence, RI},
      YEAR = {2010},
      ISBN = {978-0-8218-4929-3},
   MRCLASS = {55P62 (55M30 55S15)},
  MRNUMBER = {2648705},
       DOI = {10.1090/conm/519/10233},
       URL = {https://doi.org/10.1090/conm/519/10233},
}

@article{hha/1296138520,
author = {Sadok Kallel and Denis Sjerve},
title = {{Remarks on finite subset spaces}},
volume = {11},
journal = {Homology, Homotopy and Applications},
number = {2},
publisher = {International Press of Boston},
pages = {229 -- 250},
year = {2009},
}

@article{ghys,
author={Ghys, Etienne},
title={PROLONGEMENTS DES DIFF{\'E}OMORPHISMES DE LA SPH{\`E}RE},
journal={L'Enseignement Math{\'e}matique},
year={1991},
publisher={Fondation L'Enseignement Math{\'e}matique},
volume={37},
number={1-2},
pages={45-59},
issn={0013-8584}
}

@article{bott,
author = {Bott, R.},
journal = {Fundamenta Mathematicae},
language = {eng},
number = {1},
pages = {264-268},
title = {On the third symmetric potency of {S1}},
url = {http://eudml.org/doc/213268},
volume = {39},
year = {1952},
}

\end{document}